\def \d{\mbox{\(\,\mathrm{d}\)}}
\newcommand{\eq}[1]{\overset{\text{\tiny{(#1)}}}=}
\newcommand{\sm}{\setminus}
\newcommand{\Rr}{\mathbb{R}}
\newcommand{\Qq}{\mathbb{Q}}
\newcommand{\Zz}{\mathbb{Z}}
\newcommand{\bigset}[2]{\left\{\,#1 \, \big\vert \, #2\,\right\} }
 \newtheorem{theorem}{Theorem}[section]
\newtheorem{lem}[theorem]{Lemma}
\newtheorem{cor}[theorem]{Corollary}
\theoremstyle{definition}
\theoremstyle{remark}
\numberwithin{equation}{section}
\begin{document}

%
%
%
%
%
%
%
%
%

\title[Generalized entropies and the modular group]
 {A functional equation related to generalized entropies and the modular group}

\author{Daniel Bennequin}

\address{%
 Institut de Math\'ematiques de Jussieu-Paris Rive Gauche (IMJ-PRG)\br
 Universit\'e de Paris\br
8 place Aur\'elie N\'emours\br 
 75013 Paris, France
 }
\email{bennequin@math.univ-paris-diderot.fr }

\thanks{This article was written while the second author was a graduate student at the Universit\'e Paris Diderot.}
\author{Juan Pablo Vigneaux}
\address{Max-Planck-Institute for Mathematics in the Sciences\br
Inselstra{\ss}e 22\br
04103 Leipzig, Germany 
}
\email{vigneaux@mis.mpg.de}

\subjclass{Primary 97I70, 94A17 }

\keywords{Generalized entropies, Shannon entropy, Tsallis entropy, modular group, functional equation, information cohomology}

\date{\today}

\begin{abstract}
We solve a functional equation connected to the algebraic characterization of generalized information functions. To prove the symmetry of the solution, we study a related system of functional equations, which involves two homographies. These transformations generate the modular group, and this fact plays a crucial role in  solving the system. The method suggests a more general relation between conditional probabilities and arithmetic.
\end{abstract}

\maketitle
\section{Motivation and results}

In this paper, we study the measurable solutions $u:[0,1]\to \Rr$ of the functional equation
\begin{equation}\label{functional_entropy_u}
u(1-x)+(1-x)^\alpha u\left(\frac{y}{1-x}\right) = u(y) + (1-y)^\alpha u\left(\frac{1-x-y}{1-y}\right)
\end{equation}
for all $x,y \in [0,1)$ such that $x+y\in [0,1]$. The parameter $\alpha$ can take any positive real value. 

This equation appears in the context of algebraic characterizations of information functions. Given a random variable $X$ whose range is a finite set $E_X$, a measure of its ``information content'' is supposed to be a function $f[X]: \Delta(E_X) \to \Rr$, where $\Delta(E_X)$ denotes the set of probabilities on $E_X$,
\begin{equation}
\Delta(E_X) =\bigset{p:E_X\to [0,1]}{\sum_{x\in E_X}p(x) = 1}.
\end{equation}
The most important example of such a function is the Shannon-Gibbs entropy
\begin{equation}
S_1[X](p) := -\sum_{x\in E_X} p(x)\log p(x),
\end{equation}
where $0\log 0$ equals $0$ by convention.

Shannon entropy satisfies a remarkable property, called the \emph{chain rule}, that we now describe. Let $X$ (resp. $Y$) be a variable with range $E_X$ (resp. $E_Y$);  both $E_X$ and $E_Y$ are supposed to be finite sets. The couple $(X,Y)$ takes values in a subset $E_{XY}$ of $E_X\times E_Y$, and any probability $p$ on $E_{XY}$ induce by marginalization laws $X_*p$ on $E_X$ and $Y_*p$ on $E_Y$. For instance,
\begin{equation}
X_*p(x) = \sum_{y: (x,y)\in E_{XY}} p(x,y).
\end{equation}
The chain rule corresponds to the identities
\begin{align}
S_1[(X,Y)](p) &= S_1[X](X_*p) + \sum_{x\in E_X} X_*p(x) S_1[Y](Y_*(p|_{X=x})), \\
S_1[(X,Y)](p) &= S_1[Y](Y_*p) + \sum_{y\in E_Y} Y_*p(y) S_1[X](Y_*(p|_{Y=x})), 
\end{align}
where $p|_{X=x}$ denotes the conditional probability $y\mapsto p(y,x)/X_*p(x)$. These identities reflect the third axiom used by Shannon to characterize an information measure $H$:  ``if a choice be broken down into two successive choices, the original $H$ should be the weighted sum of the individual values of $H$'' \cite{Shannon1948}.

There is a deformed version of Shannon entropy, called generalized  entropy of degree $\alpha$ \cite[Ch.~6]{Aczel1975}. For any $\alpha\in (0,\infty)\sm\{1\}$, it is defined as
\begin{equation}
S_\alpha[X](p) := \frac{1}{1-\alpha}\left(\sum_{x\in E_X} p(x)^\alpha-1\right).
\end{equation}
This function was introduced by Havrda and Charv\'at \cite{Havrda1967}. Constantino Tsallis popularized its use in physics, as the fundamental quantity of non-extensive statistical mechanics \cite{Tsallis2009}, so $S_\alpha$ is also called  Tsallis $\alpha$-entropy. It satisfies a deformed version of the chain rule:
\begin{equation}\label{deformed_chain_rule}
S_\alpha[(X,Y)](p) = S_\alpha[X](X_*p) + \sum_{x\in X} (X_*p(x))^\alpha S_\alpha[Y](Y_*(p|_{X=x})).
\end{equation}

Suppose now that, given $\alpha >0$, we want to find the most general functions $f[X]$---for a given collection of finite random variables $X$---such that
\begin{enumerate}
\item[A.]\label{assumptionA} $f[X](\delta)=0$ whenever $\delta$ is any Dirac measure|a measure concentrated on a singleton|, which means that  variables with deterministic outputs do not give (new) information when measured;
\item[B.]\label{assumptionB}  the generalized $\alpha$-chain rule holds, i.e. for any variables $X$ and $Y$ with finite range\footnote{Assumption A can be deduced from  B if one identifies $X$ with $(X,X)$ through the diagonal map $E_X \to E_X\times E_X, \;x\mapsto (x,x)$ and then evaluates \eqref{cocycle1} at $Y=X$ and $p=\delta_{x_0}$, for any $x_0\in E_X$.}
\begin{align}
f[(X,Y)](p) &= f[X](X_*p) + \sum_{x\in E_X} (X_*p(x))^\alpha f[Y](Y_*(p|_{X=x})), \label{cocycle1}\\
f[(X,Y)](p) &= f[Y](Y_*p) + \sum_{y\in E_Y} (Y_*p(y))^\alpha f[X](Y_*(p|_{Y=x})). \label{cocycle2}
\end{align}
\end{enumerate}

The simplest non-trivial case corresponds to $E_X=E_Y=\{0,1\}$ and $E_{XY}=\{(0,0),(1,0),(0,1)\}$; a probability $p$ on $E_{XY}$ is a triple $p(0,0)=a$, $p(1,0)=b$, $p(0,1)=c$, such that $X_*p=(a+c,b)$ and $Y_*p=(a+b,c)$.  The  equality between the right-hand sides of \eqref{cocycle1} and \eqref{cocycle2} reads
\begin{multline}\label{condition_on_f}
f[X](a+c,b) + (1-b)^\alpha f[Y]\left(\frac{a}{1-b},\frac{c}{1-b}\right) = \\f[Y](a+b,c)+(1-c)^\alpha f[X]\left(\frac{a}{1-c},\frac{b}{1-c}\right),
\end{multline}
for any  triple $(a,b,c)\in[0,1]^2$ such that $a+b+c=1$. Setting $a=0$ and using assumption A, we conclude that $f[X](c,1-c)=f[Y](1-c,c)=: u(c)$ for any $c\in [0,1]$. Therefore, \eqref{condition_on_f} can be written in terms of this unique unknown $u$; if moreover we set $c=y$,  $b=x$ and consequently $a=1-x-y$, we get the functional equation \eqref{functional_entropy_u}, with the stated boundary conditions.

The main result of this article is the following.

\begin{theorem}\label{unique_solution}
Let $\alpha$ be a positive real number. Suppose $u:[0,1]\to \Rr$  is a measurable function that satisfies \eqref{functional_entropy_u} for every $x,y \in [0,1)$ such that $x+y\in [0,1]$. Then, there exists $\lambda\in \Rr$ such that $u(x)=\lambda s_\alpha(x)$, where
\begin{equation*}
s_1(x) = -x \log_2 x -(1-x)\log_2 (1-x)
\end{equation*}
and
\begin{equation*}
s_\alpha(x) =\frac{1}{1-\alpha}(x^\alpha + (1-x)^\alpha -1)
\end{equation*}
when $\alpha \neq 1$.
\end{theorem}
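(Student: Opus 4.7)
\emph{Reductions and normalization.} I would first substitute boundary values: setting $x = 0$ in \eqref{functional_entropy_u} collapses both sides to give $u(1)\bigl(1 - (1-y)^\alpha\bigr) = 0$ for every $y \in (0,1)$, forcing $u(1) = 0$, and dually $y = 0$ yields $u(0) = 0$. A direct check shows that $s_\alpha$ is itself a solution (the binary instance of the chain rule \eqref{deformed_chain_rule}), and since \eqref{functional_entropy_u} is linear in $u$, the set of measurable solutions is a real vector space. Proving the theorem amounts to showing this space is one-dimensional. Choosing $\lambda := u(1/2)/s_\alpha(1/2)$, which is well defined because $s_\alpha(1/2)\neq 0$ for every $\alpha > 0$, and replacing $u$ by $u - \lambda s_\alpha$, I reduce the problem to showing that any measurable solution vanishing at $0$, $1/2$, and $1$ must vanish identically.

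\emph{The auxiliary system and the modular group.} The heart of the argument, as announced in the abstract, is to prove the symmetry $u(x) = u(1-x)$ via an auxiliary system controlled by the modular group. The two sides of \eqref{functional_entropy_u} can be read as two different conditional expansions of a putative ternary entropy $H(x, y, 1-x-y)$ on the 2-simplex. Permuting the three coordinates $(x, y, 1-x-y)$ produces further equalities, and subtracting pairs of them yields identities in which the argument of $u$ is transformed by two homographies---schematically $\sigma : x \mapsto 1-x$ and $\tau : x \mapsto x/(1-x)$---modulated by explicit multiplicative factors of the form $(1-x)^\alpha$. The group generated by $\sigma$ and $\tau$ is commensurable with $PSL(2,\mathbb{Z})$, and the arithmetic of this group (in particular the density of its orbits on $(0,1)$ and its transitive action on the rationals) is the crucial input.

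\emph{Cauchy step and conclusion.} Once the symmetry $u(x)=u(1-x)$ is established, plugging it back into \eqref{functional_entropy_u} simplifies the equation substantially. An appropriate change of variable then converts the symmetric functional equation into an additive Cauchy equation on a sub-interval, and the measurability hypothesis rigidifies this Cauchy equation in the classical way (Banach--Tverberg), forcing linearity. The normalizations $u(0)=u(1/2)=u(1)=0$ then pin the remaining constant, giving $u \equiv 0$. I expect the principal obstacle to lie in the middle step: extracting from the permutations of the simplex coordinates a clean two-generator homographic system, and book-keeping the propagating weights $(1-x)^\alpha$ which distinguish the $\alpha$-deformed case from the Shannon case $\alpha=1$ and are ultimately responsible for the appearance of $s_\alpha$ (rather than $s_1$) in the conclusion.
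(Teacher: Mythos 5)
Your skeleton (normalize by subtracting a multiple of $s_\alpha$, prove the symmetry $u(x)=u(1-x)$ via a homographic system tied to the modular group, then fall back on the classical solution of the symmetric equation) matches the paper's architecture, but two essential pieces are missing or set up incorrectly. First, regularity: you never establish that a \emph{measurable} solution is continuous. Any orbit-based argument with homographies can only force the asymmetry to vanish on a countable orbit (in the paper, on $\Qq$), and ``density of orbits on $(0,1)$'' gives nothing for a merely measurable $u$ unless continuity is proved beforehand. The paper devotes a separate theorem to this: local boundedness \`a la Kannappan--Ng, then integration of \eqref{functional_entropy_u} in $x$ over an interval $[s,t]$ to represent $u(y)$ by integrals of $u$, which bootstraps measurability to $C^\infty$ on $(0,1)$. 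Without this step your symmetry claim cannot pass from rationals (or any dense set) to all of $[0,1]$, and the theorem is stated for measurable $u$.

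Second, the derivation of the homographic system is not licit as you describe it. ``Permuting the three coordinates $(x,y,1-x-y)$'' presupposes exactly the permutation symmetry that is in question: equation \eqref{functional_entropy_u} alone is not invariant under those permutations, so no ``further equalities'' come for free. The workable device is to form the difference $h(x)=u(x)-u(1-x)$, note $h(0)=h(1)=0$ from the substitutions $x=0$ and $y=0$, substitute $x=y=1-z$ to get $h(z)=z^\alpha h(2-z^{-1})$ on $[1/2,1]$, extend $h$ $1$-periodically to $\Rr$, and then prove by a chain of case distinctions that $h(x)=|x|^\alpha h\bigl((2x-1)/x\bigr)$ and $h(x)=-|x|^\alpha h\bigl((1-x)/x\bigr)$ hold on all of $\Rr$. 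Only then does the arithmetic enter: the two homographies correspond to matrices $A$ and $B$ with $A$ and $B^2$ generating the modular group, the orbit of $1/2$ covers $\Qq$, and one must still check that the orbit can be routed so as to avoid $0$ and $\infty$, where the weighted identities degenerate (the factor $|x|^\alpha$ vanishes at $0$). None of this bookkeeping is in your sketch, and it is precisely where the work lies. A lesser point: your final ``additive Cauchy'' step is accurate only for $\alpha=1$ (Tverberg, Kannappan--Ng); for $\alpha\neq 1$ no Cauchy equation appears---once symmetry is known, Dar\'oczy's purely algebraic argument solves \eqref{eqn_general_entropy} with no regularity at all and produces $x^\alpha+(1-x)^\alpha-1$, which a measurable-Cauchy rigidity argument would not by itself yield.
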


By convention, $0\log_2 0 := \lim_{x\to 0} x \log_2 x = 0$. For $\alpha=1$, Theorem \ref{unique_solution} is essentially Lemma 2 in \cite{Kannappan1973}. Our proof depends on two independent results.
\begin{theorem}[Regularity]\label{lemma_regularity}
Any measurable solution of \eqref{functional_entropy_u} is infinitely differentiable on the interval $(0,1)$.
\end{theorem}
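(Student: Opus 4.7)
The plan is to proceed in three stages. First, the substitution $z=1-x$ recasts the functional equation in the more symmetric form
\[
u(z) + z^\alpha u\!\left(\tfrac{y}{z}\right) = u(y) + (1-y)^\alpha u\!\left(\tfrac{z-y}{1-y}\right),
\]
valid for $0<z\le 1$ and $0\le y\le z$. Second, I upgrade measurability to local boundedness on $(0,1)$ using Lusin's theorem together with this equation. Third, I integrate in $y$ to express $u(z)$ as a continuous transform of its own integrals, and bootstrap to $C^\infty$.

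For the second stage, Lusin's theorem provides, for any $\delta>0$, a closed set $F\subset [0,1]$ with $|[0,1]\sm F|<\delta$ on which $u$ is continuous, hence bounded by some constant $M<\infty$. Fix a compact subinterval $[a,b]\subset(0,1)$ and take $z\in[a,b]$. The three maps $\varphi_1(y)=y$, $\varphi_2(y)=y/z$, $\varphi_3(y)=(z-y)/(1-y)$ are diffeomorphisms from $[0,z]$ into $[0,1]$ whose Jacobians are controlled uniformly for $z\in[a,b]$, so the set of $y\in[0,z]$ for which one of the values $\varphi_i(y)$ falls outside $F$ has measure at most $c_b\,\delta$, with $c_b$ depending only on $b$. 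Choosing $\delta$ small enough that $c_b\delta<a$ guarantees that ``good'' $y\in[0,z]$ exist; for any such $y$, the rewritten equation yields $|u(z)|\le (2+z^\alpha)M\le 3M$. Hence $u$ is bounded on $[a,b]$, and consequently locally bounded, in particular locally integrable, on $(0,1)$.

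For the third stage, I integrate the rewritten equation in $y\in[0,z]$ for fixed $z\in(0,1)$; the substitutions $t=y/z$ and $s=(z-y)/(1-y)$ handle the two non-trivial integrals and give, with $U(z)=\int_0^z u$, $V(z)=\int_0^z (1-s)^{-\alpha-2}u(s)\,ds$, and $C=\int_0^1 u$, the identity
\[
u(z)=\frac{1}{z}\!\left[U(z)+(1-z)^{\alpha+1}V(z)-z^{\alpha+1}C\right],\qquad z\in(0,1).
\]
Both $U$ and $V$ are continuous on $(0,1)$ by local integrability of $u$ (the weight $(1-s)^{-\alpha-2}$ in $V$ is bounded on $[0,z]$ for each $z<1$), so the right-hand side is continuous and $u$ is continuous on $(0,1)$. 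A standard bootstrap then closes the argument: if $u\in C^k(0,1)$ then $U'=u$ and $V'(z)=(1-z)^{-\alpha-2}u(z)$ are $C^k$, so the right-hand side is $C^{k+1}$ and thus $u\in C^{k+1}(0,1)$; iterating yields $u\in C^\infty(0,1)$. The principal obstacle is the second stage, since measurability alone gives no pointwise estimate; the trick is to exploit that the three substitutions occurring in the functional equation jointly hit $F$ on a set of positive measure inside $[0,z]$. The Jacobian controls degenerate as $z\to 0$ or $z\to 1$, which is precisely why the conclusion is naturally confined to the open interval.
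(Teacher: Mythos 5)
Your first two stages are sound and essentially reproduce the paper's first step: the Lusin-theorem argument is a self-contained rendition of the local-boundedness lemma (Kannappan--Ng) that the paper invokes, and the bound $|u(z)|\le 3M$ on compact subintervals of $(0,1)$ is correctly obtained. The gap is in the third stage. You integrate the rewritten equation over the \emph{entire} range $y\in[0,z]$, which produces the quantities $C=\int_0^1 u$, $U(z)=\int_0^z u$ and $V(z)=\int_0^z (1-s)^{-\alpha-2}u(s)\,ds$; all of these require $u$ to be integrable up to the endpoints $0$ (and, for $C$, also $1$). But your second stage only yields local boundedness, hence local integrability, on the open interval $(0,1)$ --- as you yourself observe, the controls degenerate as $z\to 0$ or $z\to 1$ --- and at this point of the argument nothing excludes a measurable solution that is non-integrable near an endpoint. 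So the representation formula $u(z)=\frac{1}{z}\bigl[U(z)+(1-z)^{\alpha+1}V(z)-z^{\alpha+1}C\bigr]$ is not justified as written, and the continuity and bootstrap steps built on it do not yet stand.

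The repair is to localize the integration, which is exactly how the paper proceeds: fix $y_0\in(0,1)$ (in your normalization, $z_0$) and integrate over a fixed compact subinterval $[s,t]$ of the other variable, chosen so that for all $z$ in a neighborhood of $z_0$ the three arguments of $u$ appearing in \eqref{functional_entropy_u} stay inside a fixed compact subset of $(0,1)$. Then every integral that appears is an integral of $u$ (against a smooth weight) over compacts of $(0,1)$, with limits depending smoothly on $z$, and only the local integrability you actually proved is used; continuity of $u$ at $z_0$ follows, and your $C^k\Rightarrow C^{k+1}$ bootstrap then goes through verbatim. With that modification your argument coincides in substance with the paper's proof; without it, the third stage does not follow from the second.
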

\begin{theorem}[Symmetry]\label{lemma_symmetry}
Any solution of \eqref{functional_entropy_u} satisfies $u(x) = u(1-x)$ for all $x\in \Qq\cap[0,1]$.
\end{theorem}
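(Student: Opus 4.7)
The plan is to introduce the antisymmetric part $v(s) := u(s) - u(1-s)$ and prove $v(p/q) = 0$ for every rational $p/q \in \mathbb{Q} \cap [0,1]$ by strong induction on the denominator $q$ (in lowest terms). Since $v(1-s) = -v(s)$ by construction, this is equivalent to the symmetry asserted in Theorem~\ref{lemma_symmetry}.

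As preliminaries I would first extract $u(0) = u(1) = 0$ from the functional equation: setting $y = 0$ in \eqref{functional_entropy_u} yields $u(0)((1-x)^\alpha - 1) = 0$ for all $x \in (0,1)$, hence $u(0) = 0$, and symmetrically $x = 0$ forces $u(1) = 0$. This gives $v(0) = v(1) = 0$, and $v(1/2) = 0$ follows from antisymmetry, which covers the base cases $q \leq 2$.

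The entire argument rests on a single carefully chosen substitution: set $y = x$ in \eqref{functional_entropy_u} (requiring $x \in [0, 1/2]$). Using the identity $(1-2x)/(1-x) = 1 - x/(1-x)$ together with the definition of $v$, the two sides collapse to the recursion
\begin{equation*}
v(x) = (1-x)^\alpha \, v\!\left(\frac{x}{1-x}\right), \qquad x \in [0, 1/2], \tag{$\star$}
\end{equation*}
whose multiplicative coefficient never vanishes on $[0, 1)$; consequently $v(x) = 0$ if and only if $v(x/(1-x)) = 0$.

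For the inductive step, I would fix $0 < p < q$ coprime with $q \geq 3$ and assume $v$ vanishes on every rational of denominator strictly less than $q$. If $p \leq q/2$, then $(\star)$ at $x = p/q$ gives $v(p/q) = ((q-p)/q)^\alpha v(p/(q-p))$, and the fraction $p/(q-p)$ has denominator $q - p < q$, so the inductive hypothesis kills the right-hand side. If instead $p > q/2$, the antisymmetry $v(p/q) = -v((q-p)/q)$ reduces to a fraction of numerator $q - p < q/2$ with the same denominator; one application of $(\star)$ then produces $v((q-p)/p)$ with denominator $p < q$, again killed by the hypothesis. The main obstacle I anticipate is recognizing that the two operations $s \mapsto s/(1-s)$ supplied by $(\star)$ and $s \mapsto 1-s$ supplied by antisymmetry together implement precisely the Euclidean algorithm on the pair $(p, q)$---a prototypical action of the modular group on the rationals, as flagged in the abstract---so the induction always terminates at one of the known zeros $\{0, 1/2, 1\}$.
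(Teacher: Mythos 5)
Your proposal is correct, and it reaches the conclusion by a genuinely more elementary route than the paper. The algebraic heart is the same: your substitution $y=x$ yields $v(x)=(1-x)^{\alpha}v\bigl(\tfrac{x}{1-x}\bigr)$ on $[0,1/2]$, which (after the change of variable $z=1-x$ and antisymmetry) is exactly the paper's relation $h(z)=-z^{\alpha}h(z^{-1}-1)$ for $z\in[1/2,1]$, i.e.\ equation \eqref{eq_difference_3}; and the boundary values $u(0)=u(1)=0$, hence $v(0)=v(1)=v(1/2)=0$, are obtained the same way. Where you diverge is in propagating the vanishing of $v$ to all of $\Qq\cap[0,1]$: the paper extends $h$ periodically to $\Rr$, proves through Lemmas \ref{aux_lemma_1}--\ref{last_lemma} that the extended function satisfies \eqref{extended_eq_1}--\eqref{extended_eq_2} on all of $\Rr$, and then invokes the modular group --- showing $A$ and $B^2$ generate $G$, that the $G$-orbit of $1/2$ covers $\Qq$, and that one can choose a word avoiding the exceptional points $0$ and $\infty$. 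Your strong induction on the reduced denominator avoids all of this: since $p/(q-p)$ (when $p<q/2$) and $(q-p)/p$ (when $p>q/2$) both lie in $[0,1]$ and have strictly smaller denominator, every step stays inside the original domain, so no periodic extension, no generation lemma, and no orbit-avoidance lemma are needed; termination is automatic from the decreasing denominator, with base cases $q\le 2$. Your closing remark is apt: the two moves $s\mapsto s/(1-s)$ and $s\mapsto 1-s$ are precisely the subtractive Euclidean algorithm, which is the arithmetic shadow of the modular-group action the paper makes explicit. What the paper's longer route buys is the extra structure it is really after --- the equations for $h$ on all of $P^1(\Rr)$ and the explicit link to $SL_2(\Zz)$ announced in the title --- whereas your argument is the shorter, self-contained path to Theorem \ref{lemma_symmetry} itself.
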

The first is proved analytically, by means of standard techniques in the field of functional equations (cf. \cite{Aczel1975,Tverberg1958,Kannappan1973}), and the second by a novel geometrical argument, relating the equation to the action of the modular group on the projective line.

 Theorems \ref{lemma_regularity} and \ref{lemma_symmetry} above imply that any measurable solution $u$ of \eqref{functional_entropy_u} must be symmetric, i.e. $u(x) = u(1-x)$ for all $x\in [0,1]$, and therefore 
\begin{equation}\label{eqn_general_entropy}
u(x) + (1-x)^\alpha u\left(\frac{y}{1-x}\right) = u(y) + (1-y)^\alpha u\left(\frac{x}{1-y}\right)
\end{equation}
whenever $x,y\in [0,1)$ and $x+y \in [0,1]$.  When $\alpha=1$, this equation is called ``the fundamental equation of information theory''; it first appeared in the work of Tverberg \cite{Tverberg1958}, who deduced it from a characterization of an ``information function'' that not only supposed a version of the chain rule, but also the invariance of the function under permutations of its arguments. Dar\'oczy introduced  the fundamental equation for general $\alpha>0$, and showed that it can be deduced from an axiomatic  characterization analogue to that of Tverberg, that again supposed invariance under permutations along with a deformed chain rule akin to \eqref{deformed_chain_rule}, see \cite[Thm.~5]{Daroczy1970}.

For $\alpha = 1$,  Tverberg \cite{Tverberg1958} showed that, if $u:[0,1]\to \Rr$ is symmetric, Lebesgue integrable and satisfies \eqref{eqn_general_entropy}, then it must be a multiple of $s_1(x)$. In \cite{Kannappan1973},  Kannappan and Ng weakened the regularity condition, showing that  all measurable solutions of \eqref{eqn_general_entropy} have the form $u(x) = As_1(x) + Bx$ (where $A$ and $B$ are arbitrary real constants), which reduces to $u(x) = As_1(x)$ when $u$ is symmetric. In fact, they solved some generalizations of the fundamental equation, proving among other things that, when $\alpha=1$, the only measurable solutions of \eqref{functional_entropy_u} are multiples of $s_1(x)$. 

For $\alpha \neq 1$, Dar\'oczy \cite{Daroczy1970} established that any $u:[0,1]\to \Rr$ that satisfies  \eqref{eqn_general_entropy} and $u(0)=u(1)$ has the form\footnote{In fact, he supposes $u(1/2)=1$, but the argument works in general.}
\begin{equation}\label{solution_u}
u(x) = \frac{u(1/2)}{2^{1-\alpha}-1} (x^\alpha + (1-x)^\alpha - 1),
\end{equation}
 \emph{without any hypotheses on the regularity of $u$.} The proof starts by proving that any solution of \eqref{eqn_general_entropy} must satisfy $u(0)=0$ (setting $x=0$), and hence be symmetric (setting $y=1-x$). Since we are able to prove symmetry of the solutions of \eqref{functional_entropy_u} \emph{restricted to rational arguments} without any regularity hypothesis, we also get the following result.

\begin{cor}
For any $\alpha\in (0,\infty)\sm \{1\}$, the only functions $u:\Qq\cap[0,1]\to \Rr$ that satisfy equation \eqref{functional_entropy_u} are multiples of $s_\alpha$. 
\end{cor}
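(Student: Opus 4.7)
The plan is to combine Theorem \ref{lemma_symmetry} with Dar\'oczy's theorem quoted just above the corollary, both of which are purely algebraic and require no regularity of $u$.

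First, I would evaluate \eqref{functional_entropy_u} at the rational value $x=0$: this gives $u(1)+u(y)=u(y)+(1-y)^\alpha u(1)$, i.e., $u(1)(1-(1-y)^\alpha)=0$ for every rational $y\in(0,1)$. Since $\alpha>0$, the factor $1-(1-y)^\alpha$ is nonzero for such $y$, so $u(1)=0$. Theorem \ref{lemma_symmetry} then yields $u(x)=u(1-x)$ for every $x\in\Qq\cap[0,1]$, and in particular $u(0)=u(1)=0$.

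Second, using this symmetry together with the algebraic identity $1-\frac{1-x-y}{1-y}=\frac{x}{1-y}$ (which preserves rationality whenever $x,y$ are rational and $y\neq 1$), I would replace $u(1-x)$ by $u(x)$ on the left of \eqref{functional_entropy_u} and $u\bigl((1-x-y)/(1-y)\bigr)$ by $u\bigl(x/(1-y)\bigr)$ on the right, thereby rewriting \eqref{functional_entropy_u} as the fundamental equation \eqref{eqn_general_entropy} for all rational $x,y\in[0,1)$ with $x+y\leq 1$.

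Finally, I would invoke Dar\'oczy's theorem from \cite{Daroczy1970}: any $u$ satisfying \eqref{eqn_general_entropy} together with $u(0)=u(1)$ has the form \eqref{solution_u}, hence is a multiple of $s_\alpha$. The key point to verify, and the main (mild) obstacle, is that Dar\'oczy's proof operates exclusively through substitutions producing rational expressions in the original arguments $x,y$, so it transcribes verbatim from $[0,1]$ to $\Qq\cap[0,1]$ and does not leave the rational realm. The conclusion is $u=\lambda s_\alpha$ on $\Qq\cap[0,1]$ with $\lambda=u(1/2)/(2^{1-\alpha}-1)$, which is well-defined because $\alpha\neq 1$.
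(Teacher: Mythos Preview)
Your proposal is correct and follows essentially the same route as the paper's own proof: derive $u(1)=0$ (and $u(0)=0$) from simple substitutions, invoke Theorem~\ref{lemma_symmetry} for symmetry on rationals, pass from \eqref{functional_entropy_u} to \eqref{eqn_general_entropy}, and then observe that Dar\'oczy's argument in \cite{Daroczy1970} only uses rational substitutions and hence carries over verbatim. The only cosmetic difference is that the paper obtains $u(0)=0$ directly by setting $y=0$, whereas you deduce it from symmetry and $u(1)=0$; both are fine.
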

\begin{proof}
Set $x=0$ in \eqref{functional_entropy_u} to conclude that $u(1)=0$, and $y=0$ to obtain $u(0)=0$. Moreover, $u$ must be symmetric (Theorem \ref{lemma_symmetry}), hence it must fulfill \eqref{eqn_general_entropy} when the arguments are rational. Given these facts, Dar\'oczy's proof in \cite[p.~39]{Daroczy1970} applies with no modifications when restricted to $p,q\in \Qq$.
\end{proof}

More details on the characterization of information functions by means of functional equations can be found in the classical reference \cite{Aczel1975}, which gives a detailed historical introduction. Reference \cite{Baudot2015} summarizes more recent developments in connection with homological algebra.
 
It is quite remarkable that Theorem \ref{unique_solution} serves as a fundamental result to prove that, up to a multiplicative constant, $\{S_\alpha[X]\}_{X\in \mathcal S}$ is the only collection of measurable functionals (not necessarily invariant under permutations) that satisfy the corresponding $\alpha$-chain rule, for any generic set of random variables $\mathcal S$. In order to do this, one introduces an adapted cohomology theory, called information cohomology \cite{Baudot2015}, where the chain rule corresponds to the $1$-cocycle condition and thus has an algebro-topological meaning. The details can be found in the dissertation \cite{Vigneaux2019-thesis}.

\section{The modular group}

The group $G= SL_2(\Zz)/\{\pm I\}$ is called the \textbf{modular group}; it is the image of $SL_2(\Zz)$ in $PGL_2(\Rr)$. We keep using the matrix notation for the images in this quotient. We make $G$ act on $P^1(\Rr)$ as follows: an element $$g=\begin{pmatrix}
a & b \\ c & d
\end{pmatrix}\in G$$ acting on $[x:y]\in P^1(\Rr)$ (homogeneous coordinates) gives 
$$g[x:y] = [ax + by:cx+dy].$$

Let $S$ and $T$ be the elements of $G$ defined by the matrices 
\begin{equation}
S = 
\begin{pmatrix}
0 & -1 \\
1 & 0
\end{pmatrix}\quad \text{ and }\quad
T = 
\begin{pmatrix}
1 & 1 \\
0 & 1
\end{pmatrix}.
\end{equation}
The group $G$ is generated by $S$ and $T$ \cite[Ch. VII, Th. 2]{Serre1973}; in fact, one can prove that $\langle S,T;S^2, (ST)^3\rangle$ is a presentation of $G$.

\section{Regularity: proof of Theorem \ref{lemma_regularity} }
Lemma 3 in \cite{Kannappan1973} implies that $u$ is locally bounded on $(0,1)$ and hence locally integrable. Their proof is for $\alpha =1$, but the argument applies to the general case with almost no modification, just  replacing 
$$|u(y)| = \left| u(1-x) + (1-x) u \left(\frac{y}{1-x}\right) - (1-y) u\left(\frac{1-x-y}{1-y}\right) \right| \leq 3N,$$
where $x,y$ are such that $u(1-x)\leq N$, $u \left(\frac{y}{1-x}\right) \leq N$ and $u\left(\frac{1-x-y}{1-y}\right) \leq N$, by 
$$|u(y)| = \left| u(1-x) + (1-x)^\alpha u \left(\frac{y}{1-x}\right) - (1-y)^\alpha u\left(\frac{1-x-y}{1-y}\right) \right| \leq 3N,$$
which is evidently valid too whenever $x,y\in (0,1)$.

To prove the differentiability, we also follow the method in \cite{Kannappan1973}---already present in \cite{Tverberg1958}. Let us fix an arbitrary $y_0\in (0,1)$; then, it is possible to chose $s,t\in (0,1)$, $s<t$, such that 
$$\frac{1-y-s}{1-y}, \frac{1-y-t}{1-y}\in (0,1),$$
for all $y$ in certain neighborhood of $y_0$.  We integrate \eqref{functional_entropy_u} with respect to $x$, between $s$ and $t$, to obtain
\begin{equation}\label{integrated_u}
(s-t)u(y) = \int_{1-t}^{1-s} u(x) \d x + y^{1+\alpha}\int_{\frac{y}{1-s}}^{\frac{y}{1-t}} \frac{u(z)}{z^3} \d z + (1-y)^{1+\alpha} \int_{\frac{1-y-s}{1-y}}^{\frac{1-y-t}{1-y}} u(z) \d z.
\end{equation}
The continuity of the right-hand side of \eqref{integrated_u} as a function of $y$ at $y_0$, implies that $u$ is continuous at $y_0$ and therefore on $(0,1)$. The continuity of $u$ in the right-hand side of \eqref{integrated_u} implies that $u$ is differentiable at $y_0$. An iterated application of this argument shows that $u$ is infinitely differentiable on $(0,1)$.

\section{Symmetry: proof of Theorem \ref{lemma_symmetry}}
Define the function $h:[0,1]\to \Rr$ through 
\begin{equation}
\forall x \in [0,1], \quad h(x) = u(x)-u(1-x).
\end{equation}
Observe that  $h$ is anti-symmetric around $1/2$, that is, we have
\begin{equation}\label{eq_difference_2}
\forall x\in [0,1], \quad h(x) = -h(1-x).
\end{equation}
Let now  $z\in \left[\frac 12, 1\right]$ be arbitrary and use the substitutions $x=1-z$ and $y=1-z$ in \eqref{functional_entropy_u} to derive the identity 
\begin{equation}\label{eq_difference_1}
\forall z\in\left[\frac 12, 1\right], \quad h(z) = z^\alpha h(2-z^{-1}).
\end{equation}
Using the anti-symmetry of $h$ to modify the right-hand side of the previous equation, we also deduce that
\begin{equation}\label{eq_difference_3}
\forall z \in \left[\frac 12, 1\right], \quad h(z) = - z^\alpha h (z^{-1}-1).
\end{equation}

Setting $x=0$ (respectively $y=0$) in \eqref{functional_entropy_u}, we  to conclude that $u(1)=0$ (resp.  $u(0)=0$). Hence,   the function  $h$ is subject to the boundary conditions  $h(0)=h(1)=0$. From \eqref{eq_difference_1}, it follows that $h(1/2)=h(0)/2^\alpha = 0$. If the domain of $h$ is extended to the whole real line imposing $1$-periodicity:
 \begin{equation}\label{eq_difference_periodicity}
\forall x \in ]-\infty, \infty[, \quad h(x+1) = h(x),
\end{equation}
 a similar argument can be used to determine the value of $h$ at any rational argument. To that end, it is important to establish first that \eqref{eq_difference_1} and \eqref{eq_difference_3} hold for the extended function.

\begin{theorem}\label{thm:eqns_extended_h}
The function $h$, extended periodically to $\Rr$, satisfies the equations
\begin{align}
\forall x \in \Rr, \quad & h(x) = |x|^\alpha h \left(\frac{2x-1}{x}\right), \label{extended_eq_1}\\
\forall x \in \Rr, \quad & h(x) = -|x|^\alpha h \left(\frac{1-x}{x}\right). \label{extended_eq_2}
\end{align}
\end{theorem}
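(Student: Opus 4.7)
The plan is to merge the two target identities \eqref{extended_eq_1} and \eqref{extended_eq_2} into a single functional equation by exploiting the $1$-periodicity \eqref{eq_difference_periodicity} together with the oddness of the periodic extension of $h$, prove this single equation on the base interval $[1/2,1]$, propagate it to $(0,1/2)$ via a telescoping recurrence reminiscent of the Gauss map, and finally reach all of $\Rr\sm\{0\}$ using the symmetries $x\mapsto 1/x$ and $x\mapsto -x$.

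The periodic extension of $h$ is odd, since $h(-x)=h(1-x)=-h(x)$ by \eqref{eq_difference_periodicity} and \eqref{eq_difference_2}. Combining periodicity and oddness, one has $h((2x-1)/x)=h(2-1/x)=h(-1/x)=-h(1/x)$ and $h((1-x)/x)=h(1/x-1)=h(1/x)$, so that for $x\neq 0$ both \eqref{extended_eq_1} and \eqref{extended_eq_2} collapse to the single identity
\begin{equation*}
h(x)=-|x|^\alpha\, h(1/x), \qquad (\ast)
\end{equation*}
which is what I would actually prove. On $[1/2,1]$, $(\ast)$ coincides with \eqref{eq_difference_3} once periodicity has been used to replace $h(1/z-1)$ by $h(1/z)$.

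To cover $x\in(0,1/2)$, I would combine antisymmetry \eqref{eq_difference_2} with \eqref{eq_difference_3} applied at $1-x\in[1/2,1]$ to obtain the auxiliary recurrence $h(x)=(1-x)^\alpha h(x/(1-x))$ valid on $[0,1/2]$. Writing $x=1/(n+y)$ with integer $n\geq 2$ and $y\in[0,1)$, this recurrence sends $1/(k+y)$ to $1/((k-1)+y)$ and introduces the factor $((k-1+y)/(k+y))^\alpha$; iterating from $k=n$ down to $k=2$ telescopes the prefactor to $((1+y)/(n+y))^\alpha$, and the resulting argument $1/(1+y)$ lands inside $(1/2,1]$, where the base case of $(\ast)$ gives $h(1/(1+y))=-(1+y)^{-\alpha}h(y)$. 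Substituting back and rewriting $h(y)$ as $h(1/x)$ by periodicity recovers $(\ast)$ on $(0,1/2)$. Finally, $(\ast)$ is invariant under $x\mapsto 1/x$, so its validity on $(0,1]$ promotes automatically to $[1,\infty)$; since both sides are odd under $x\mapsto -x$, one then extends $(\ast)$ to all of $\Rr\sm\{0\}$.

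The main difficulty lies in the middle step. The Gauss-like map $x\mapsto x/(1-x)$, restricted to inputs of the form $1/(n+y)$, is conjugate to the shift $n\mapsto n-1$ on the continued fraction expansion of $1/x$, so that finitely many applications deposit the argument inside $(1/2,1]$ where \eqref{eq_difference_3} is directly available. Verifying that the telescoping factors combine to exactly the $\alpha$-twist demanded by $(\ast)$ is the delicate point; conceptually it is the combinatorial trace of the modular group action on $P^1(\Rr)$ recalled in Section~2, which is precisely what makes the recursion compatible with the twisted functional equation.
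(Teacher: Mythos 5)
Your proposal is correct, and it organizes the argument differently from the paper. The paper proves \eqref{extended_eq_1} directly, region by region ($[1/2,1]$ from the original identity, then $[1,2]$, then $[2,\infty)$ by induction over unit intervals, then $[0,1/2]$, then the negative half-line), and only afterwards obtains \eqref{extended_eq_2} from the extended anti-symmetry. You instead observe that, thanks to $1$-periodicity and oddness of the extension, both target identities are equivalent (away from $x=0$) to the single equation $h(x)=-|x|^\alpha h(1/x)$, which is manifestly compatible with the involution $x\mapsto 1/x$ and with $x\mapsto -x$; this collapses the paper's five-region case analysis into one base case on $[1/2,1]$ (where the equation is just \eqref{eq_difference_3} plus periodicity), one telescoping step on $(0,1/2)$, and two symmetry promotions. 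Your telescoping over $x=1/(n+y)$ is, in substance, the same induction as the paper's Lemma \ref{aux_lemma_3} read in the reciprocal variable, and the auxiliary recurrence $h(x)=(1-x)^\alpha h\bigl(x/(1-x)\bigr)$ on $[0,1/2]$ is the paper's identity \eqref{auxiliary_2} in disguise; I checked that the product of the factors $\bigl((k-1+y)/(k+y)\bigr)^\alpha$ telescopes to exactly the twist required, so the ``delicate point'' you flag does go through. Two small matters to tidy up: the oddness of the extended $h$ does not follow from \eqref{eq_difference_2} alone for arbitrary real $x$ (that identity is stated only on $[0,1]$), so you need the fractional-part reduction as in Lemma \ref{aux_lemma_1}; and the statement of the theorem formally includes $x=0$, which in your framework (as in the paper's) is handled by the convention $h(0)=0$ and $|0|^\alpha=0$. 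Neither is a genuine gap; the net effect of your route is a cleaner, more symmetric proof built from the same ingredients.
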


We establish first the anti-symmetry around $1/2$ of the extended $h$ (Lemma \ref{aux_lemma_1}), which implies that \eqref{extended_eq_2} follows from \eqref{extended_eq_1}; the latter is a consequence of Lemmas \ref{aux_lemma_2}--\ref{last_lemma}.  

\begin{lem}\label{aux_lemma_1}
$$\forall x \in \Rr, \quad h(x) = -h(1-x).$$
\end{lem}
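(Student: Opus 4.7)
The plan is to reduce the statement on $\Rr$ to the antisymmetry \eqref{eq_difference_2} already established on $[0,1]$, exploiting the $1$-periodicity \eqref{eq_difference_periodicity} of the extension. The boundary values $h(0)=h(1)=0$, noted just before the statement, make the extension well-defined and handle the integer case.

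First I would fix $x\in\Rr$ and write $x = n + r$ with $n\in\Zz$ and $r \in [0,1)$ (the fractional part decomposition). Then $1 - x = -n + (1-r)$. By periodicity, $h(x) = h(r)$, so the task reduces to evaluating $h(1-x)$.

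Next I would split into two cases. If $r = 0$, then both $x$ and $1-x$ are integers, so $h(x) = h(0) = 0$ and $h(1-x) = h(1-n) = h(0) = 0$ by periodicity and the boundary conditions; the identity $h(x) = -h(1-x)$ holds trivially. If instead $r \in (0,1)$, then $1 - r \in (0,1)$ as well, and by periodicity $h(1-x) = h(-n + (1-r)) = h(1-r)$. Now both $r$ and $1-r$ belong to $[0,1]$, so equation \eqref{eq_difference_2} applies and yields $h(r) = -h(1-r)$. Chaining the equalities gives
\begin{equation*}
h(x) = h(r) = -h(1-r) = -h(1-x),
\end{equation*}
as desired.

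There is no real obstacle here; the lemma is essentially bookkeeping for the periodic extension. The only point requiring a brief word is the integer case, which is why it is important that the original antisymmetry and the boundary conditions $h(0)=h(1)=0$ are compatible with $1$-periodicity. This lemma is the easy warm-up; the genuine work lies in the subsequent lemmas that combine with it to prove \eqref{extended_eq_1}.
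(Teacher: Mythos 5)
Your argument is correct and is essentially the paper's proof: decompose $x$ into integer and fractional parts, apply periodicity on both sides, and use the antisymmetry \eqref{eq_difference_2} on $[0,1]$ in the middle. Your separate treatment of the integer case is harmless but unnecessary, since \eqref{eq_difference_2} already holds at the endpoints $0$ and $1$ (both values being $0$), so the single chain of equalities covers it.
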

\begin{proof}
We write $x = [x]+\{x\}$, where $\{x\}:= x-[x]$. Then,
$$h(x) \eq{\ref{eq_difference_periodicity}}  h(\{x\}) \eq{\ref{eq_difference_2}} - h(1-\{x\})
\eq{\ref{eq_difference_periodicity}}  -h(1-\{x\}-[x]) = - h(1-x).$$ 
\end{proof}

\begin{lem}\label{aux_lemma_2}
\begin{equation}\label{auxiliary_1}
\forall x\in [1,2], \quad h(x) = x^\alpha h (2-x^{-1}).
\end{equation}
\end{lem}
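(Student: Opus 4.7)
The plan is to reduce the claim on $[1,2]$ to the original identity \eqref{eq_difference_1}, which is valid on $[1/2,1]$. The bridge between the two is provided by the anti-symmetry of the extended $h$ (Lemma \ref{aux_lemma_1}) and the $1$-periodicity \eqref{eq_difference_periodicity}.

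First I would use periodicity to push both sides back into $[0,1]$. For $x \in [1,2]$ one has $h(x) = h(x-1)$, and since $2 - x^{-1} \in [1, 3/2]$, also $h(2 - x^{-1}) = h(1 - x^{-1})$. So the lemma reduces to showing
\begin{equation*}
h(x-1) = x^\alpha\, h(1 - x^{-1}) \quad \text{for all } x \in [1,2].
\end{equation*}
Setting $v = 1 - x^{-1}$, which ranges over $[0,1/2]$, one computes $x - 1 = v/(1-v)$ and $x^\alpha = (1-v)^{-\alpha}$. Thus the task becomes the equivalent identity
\begin{equation*}
h(v) = (1-v)^\alpha\, h\!\left(\frac{v}{1-v}\right) \quad \text{for all } v \in [0, 1/2].
\end{equation*}

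To prove this last identity I would apply \eqref{eq_difference_1} to $z = 1 - v \in [1/2, 1]$. A short computation gives $2 - z^{-1} = (1 - 2v)/(1-v)$, so
\begin{equation*}
h(1 - v) = (1-v)^\alpha\, h\!\left(\frac{1-2v}{1-v}\right).
\end{equation*}
Now I apply Lemma \ref{aux_lemma_1} on both sides. The left-hand side becomes $-h(v)$. For the right-hand side, the algebraic identity $(1-2v)/(1-v) = 1 - v/(1-v)$, together with the fact that $v/(1-v) \in [0,1]$ for $v \in [0,1/2]$, lets me rewrite $h((1-2v)/(1-v)) = -h(v/(1-v))$. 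The two minus signs cancel, yielding exactly the desired identity.

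The proof is essentially bookkeeping: the substantive content all lives in \eqref{eq_difference_1} and the anti-symmetry already established in Lemma \ref{aux_lemma_1}. The only potential pitfall is verifying the interval constraints at each step---in particular, ensuring $v/(1-v) \in [0,1]$ so that Lemma \ref{aux_lemma_1} may be invoked, which is precisely why the hypothesis $v \in [0, 1/2]$ (equivalently $x \in [1,2]$) cannot be relaxed without further argument.
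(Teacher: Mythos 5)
Your proof is correct and takes essentially the same route as the paper, which likewise reduces the claim via periodicity and a change of variables (there $u=x-1$ together with \eqref{eq_difference_3}, which is just \eqref{eq_difference_1} combined with anti-symmetry) to the base identity on $[1/2,1]$. One small correction to your closing remark: Lemma \ref{aux_lemma_1} holds for all real arguments, so the restriction $v\in[0,1/2]$ is genuinely needed only to guarantee $1-v\in[1/2,1]$ so that \eqref{eq_difference_1} applies, not to justify the use of anti-symmetry at $v/(1-v)$.
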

\begin{proof}
For $h$ is periodic, \eqref{auxiliary_1} is equivalent to
\begin{equation}
\forall x\in [1,2], \quad h(x-1) = x^\alpha h(1-x^{-1}),
\end{equation}
 and the change of variables $u=x-1$ gives
\begin{equation}\label{auxiliary_2}
\forall u \in [0,1], \quad h(u) = (u+1)^\alpha h\left(\frac{u}{u+1}\right).
\end{equation} 
Note that $1 - \frac{u}{u+1} = \frac{1}{u+1} \in [1/2,1]$ whenever $u\in[0,1]$. Therefore,
$$h\left(\frac{u}{u+1}\right) \eq{Lemma \ref{aux_lemma_1}} -h\left(\frac{1}{u+1}\right) \eq{\ref{eq_difference_3}} \left(\frac{1}{u+1}\right)^\alpha h(u).$$
This establishes \eqref{auxiliary_2}.
\end{proof}

\begin{lem}\label{aux_lemma_3}
\begin{equation}\label{auxiliary_3}
\forall x \in[2,\infty[,  \quad h(x) = x^\alpha h(2-x^{-1}).
\end{equation}
\end{lem}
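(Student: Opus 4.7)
The plan is to introduce the auxiliary function $\psi:[1,\infty)\to\Rr$ defined by
$$\psi(x) := x^\alpha h(1/x),$$
show that $\psi$ is $1$-periodic on $[1,\infty)$, and then pull \eqref{auxiliary_3} back to Lemma \ref{aux_lemma_2} via this periodicity. Observe first that the statement to be proved is equivalent, through the chain of equalities $h(2-x^{-1}) = h(1-x^{-1}) = -h(x^{-1})$ (the first by $1$-periodicity of $h$ since $x\geq 2$, the second by Lemma \ref{aux_lemma_1}), to the identity $h(x) = -\psi(x)$ for every $x\in[2,\infty)$. Proving the latter is therefore my target.

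First I would verify that $\psi(x+1)=\psi(x)$ for every $x\geq 1$. Fixing such an $x$, we have $1/(x+1)\in(0,1/2]$, so Lemma \ref{aux_lemma_1} gives $h(1/(x+1)) = -h(x/(x+1))$; in turn, $x/(x+1)\in[1/2,1)$, so equation \eqref{eq_difference_3} applied with $z=x/(x+1)$ (for which $z^{-1}-1 = 1/x$) yields $h(x/(x+1)) = -(x/(x+1))^\alpha h(1/x)$. Combining these two identities and multiplying through by $(x+1)^\alpha$ produces the desired $\psi(x+1) = \psi(x)$.

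Next, given $x\in[2,\infty)$, let $n:=\lfloor x\rfloor$ and $t := x - n + 1 \in[1,2]$. Iterating the relation from Step~1 a total of $n-1$ times gives $\psi(x)=\psi(t)$. On the other hand, applying Lemma \ref{aux_lemma_2} to $t$ and then using once more the reduction $h(2-1/t)=h(1-1/t)=-h(1/t)$ (available for $t\in[1,2]$ by periodicity and Lemma \ref{aux_lemma_1}) yields $h(t) = -t^\alpha h(1/t) = -\psi(t)$, so $\psi(x)= -h(t)$. Since $t\equiv x\pmod 1$, the $1$-periodicity of $h$ gives $h(t)=h(x)$, and we conclude $\psi(x) = -h(x)$, which is \eqref{auxiliary_3}.

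I do not anticipate a serious obstacle here; the only care needed is bookkeeping, namely verifying at each invocation of Lemma \ref{aux_lemma_1}, equation \eqref{eq_difference_3} or Lemma \ref{aux_lemma_2} that the argument actually lies in the range ($\Rr$, $[1/2,1]$, and $[1,2]$ respectively) where the identity has already been established. No new geometric input is required beyond what was used to prove Lemma \ref{aux_lemma_2}; the whole argument is a finite chain of substitutions together with the transport-by-periodicity provided by Step~1.
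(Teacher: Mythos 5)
Your argument is correct and is essentially the paper's proof in different packaging: the $1$-periodicity of $\psi(x)=x^\alpha h(1/x)$ is exactly the paper's transfer identity \eqref{auxiliary_4} rewritten via the anti-symmetry of Lemma \ref{aux_lemma_1}, and iterating it down to $[1,2]$ replaces the paper's explicit recurrence anchored at Lemma \ref{aux_lemma_2}. The range checks you flag ($1/(x+1)\in(0,1/2]$, $x/(x+1)\in[1/2,1)$, $t\in[1,2]$) all hold, so there is no gap.
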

\begin{proof}
If $x\in[2,\infty[$, then $1 - \frac 1x \in \left[\frac 12, 1\right]$ and we can apply equation \eqref{eq_difference_1} to obtain
\begin{equation}\label{auxiliary_4}
h\left(1 - \frac 1x\right) \eq{\ref{eq_difference_1}}  \left(1 - \frac 1x\right)^\alpha h \left(2-\left(1 - \frac 1x\right)^{-1}\right) = \left(\frac{x-1}{x}\right)^\alpha h\left(1- \frac 1 {x-1}\right).
\end{equation}
We prove \eqref{auxiliary_3} by recurrence. The case $x\in [1,2]$ corresponds to Lemma \ref{aux_lemma_2}. Suppose it is valid on $[n-1,n]$, for certain $n\geq 2$; for $x\in [n,n+1]$,
\begin{align*}
h(x) &\eq{\ref{eq_difference_periodicity}} h(x-1) \\
&\eq{rec.} (x-1)^\alpha h(2-(x-1)^{-1}) \\
&\eq{\ref{eq_difference_periodicity}} (x-1)^\alpha h(1-(x-1)^{-1}) \\
& \eq{\ref{auxiliary_4}} x^\alpha h(1-x^{-1})\\
& \eq{\ref{eq_difference_periodicity}} x^\alpha h(1-x^{-1}).
\end{align*}
\end{proof}

\begin{lem}\label{aux_lemma_3bis}
\begin{equation}\label{auxiliary_4bis}
\forall x \in \left[0,\frac 12\right], \quad h(x) = -x^\alpha h(x^{-1}-1).
\end{equation}
\end{lem}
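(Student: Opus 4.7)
The plan is to derive \eqref{auxiliary_4bis} by transporting the already-proved identity \eqref{auxiliary_3} back to the interval $[0,1/2]$ via inversion, periodicity, and the anti-symmetry of Lemma \ref{aux_lemma_1}. The decisive observation is that the map $x\mapsto x^{-1}$ sends $(0,1/2]$ bijectively onto $[2,\infty)$, which is exactly the range where Lemma \ref{aux_lemma_3} supplies $h(z) = z^\alpha h(2 - z^{-1})$.

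Concretely, fix $x \in (0,1/2]$, so that $x^{-1} \in [2,\infty)$, and apply Lemma \ref{aux_lemma_3} at the point $z = x^{-1}$ to obtain $h(x^{-1}) = x^{-\alpha} h(2 - x)$, i.e.\ $h(2-x) = x^\alpha h(x^{-1})$. Next, use $1$-periodicity \eqref{eq_difference_periodicity} to simplify both arguments: $h(2-x) = h(-x) = h(1-x)$ and $h(x^{-1}) = h(x^{-1} - 1)$, which yields $h(1-x) = x^\alpha h(x^{-1} - 1)$. Finally, invoke the anti-symmetry from Lemma \ref{aux_lemma_1} to conclude $h(x) = -h(1-x) = -x^\alpha h(x^{-1} - 1)$, which is \eqref{auxiliary_4bis}.

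The main thing requiring care, and essentially the only obstacle, is the bookkeeping with the intervals and the periodicity shifts: one has to check that $x^{-1}$ really lies in the domain of Lemma \ref{aux_lemma_3} throughout $(0,1/2]$, and that the two reductions modulo $1$ produce precisely the arguments $1-x$ and $x^{-1}-1$ named in the statement. The endpoint $x=0$ is handled separately: since $h(0)=0$ and $0^\alpha = 0$, the identity holds trivially under the natural convention that the right-hand side is read as $0$ at $x=0$, in the same spirit as \eqref{eq_difference_3} at $z=1$. Beyond this, the lemma is a short combination of Lemmas \ref{aux_lemma_1} and \ref{aux_lemma_3} with periodicity, with no technical difficulty expected.
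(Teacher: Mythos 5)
Your proof is correct and follows essentially the same route as the paper: both derive the identity from Lemma \ref{aux_lemma_3} combined with $1$-periodicity and the anti-symmetry of Lemma \ref{aux_lemma_1}, differing only in the parametrization (you substitute $z=x^{-1}\in[2,\infty)$ directly, while the paper works with $u=x^{-1}-1\geq 1$), and your handling of the endpoint $x=0$ via $h(0)=0$ matches the paper's.
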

\begin{proof}
The previous lemma and periodicity imply that \\$h(x-1) = x^\alpha h(1-x^{-1})$ for all $x\geq 2$, i.e.
\begin{equation}\label{auxiliary_5}
\forall u \geq 1, \quad h(u) = (u+1)^\alpha h\left(1-\frac{1}{u+1}\right).
\end{equation}
Then, for $u\geq 1$,
\begin{equation}\label{auxiliary_6}
h\left(\frac{1}{u+1}\right) \eq{Lem. \ref{aux_lemma_1}} -h\left(1-\frac{1}{u+1}\right) \eq{\ref{auxiliary_5}} - \left(\frac{1}{u+1}\right)^\alpha h(u).
\end{equation}
We set $y=(u+1)^{-1}\in \left(0,\frac 12\right]$. Equation \eqref{auxiliary_6} reads
\begin{equation}
\forall y \in \left(0,\frac 12\right], \quad h(y) = -y^\alpha h(y^{-1}-1).
\end{equation}
Since $h(0) = 0$, the lemma is proved.
\end{proof}

\begin{lem}
\begin{equation}
\forall x \in \left[0,\frac 12\right], \quad h(x) = x^\alpha h(2-x^{-1}).
\end{equation}
\end{lem}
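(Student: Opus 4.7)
The plan is to derive this statement directly by combining Lemma \ref{aux_lemma_3bis} with the anti-symmetry relation of Lemma \ref{aux_lemma_1}. For $x \in [0, 1/2]$, we have $x^{-1} \geq 2$, so $x^{-1} - 1 \geq 1$ lies in the range where Lemma \ref{aux_lemma_3bis} already gives us
\begin{equation*}
h(x) = -x^\alpha h(x^{-1} - 1).
\end{equation*}
The key observation is the algebraic identity $2 - x^{-1} = 1 - (x^{-1} - 1)$, which converts the argument $x^{-1}-1$ appearing in Lemma \ref{aux_lemma_3bis} into the argument $2-x^{-1}$ appearing in the desired conclusion, at the cost of a sign.

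Concretely, I would apply Lemma \ref{aux_lemma_1} to the point $u = x^{-1} - 1$ to get $h(x^{-1} - 1) = -h(1 - (x^{-1} - 1)) = -h(2 - x^{-1})$, and then substitute this into the expression above. The full chain is
\begin{equation*}
h(x) \eq{Lem. \ref{aux_lemma_3bis}} -x^\alpha h(x^{-1} - 1) \eq{Lem. \ref{aux_lemma_1}} x^\alpha h(2 - x^{-1}),
\end{equation*}
valid for every $x \in (0, 1/2]$. The boundary case $x = 0$ is handled by the convention $h(0) = 0$ together with the fact that $h$ evaluated at $2 - x^{-1}$ is, after periodic reduction, bounded near $x=0$, so both sides vanish (alternatively, one simply excludes $x=0$ and verifies it separately using $h(0)=0$ on the left).

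I do not expect any real obstacle: the entire content is the bookkeeping identity $2 - x^{-1} = 1 - (x^{-1}-1)$ combined with the two lemmas already proved. The only thing to double-check is the domain condition, namely that $x^{-1}-1 \geq 1$ so that Lemma \ref{aux_lemma_3bis} is applicable at the correct point; this is immediate from $x \leq 1/2$.
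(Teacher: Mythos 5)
Your proposal is correct and is essentially identical to the paper's proof, which also deduces the claim from Lemma \ref{aux_lemma_3bis} together with the anti-symmetry of Lemma \ref{aux_lemma_1} via the identity $2 - x^{-1} = 1 - (x^{-1}-1)$. The endpoint $x=0$ is treated in the paper exactly as in your alternative remark, simply via $h(0)=0$.
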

\begin{proof}
Immediately deduced from the previous lemma using the anti-symmetric property in Lemma \ref{aux_lemma_1}.
\end{proof}

\begin{lem}\label{last_lemma}
$$\forall x \in ]-\infty,0], \quad h(x) = -x^\alpha h(2-x^{-1}).$$
\end{lem}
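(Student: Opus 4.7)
The plan is to reduce the statement to a single reciprocal-type identity for positive arguments and then establish that identity using the lemmas already at hand. More precisely, I will show
\[
h(z) \,=\, -z^\alpha \, h(z^{-1}) \qquad \text{for all } z>0,
\]
and deduce the present lemma from it.

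For the reduction, given $x<0$, set $z=-x>0$. Lemma~\ref{aux_lemma_1} together with $1$-periodicity \eqref{eq_difference_periodicity} yields
\[
h(x) \,=\, -h(1-x) \,=\, -h(-x) \,=\, -h(z),
\]
while periodicity alone gives $h(2-x^{-1})=h(2+z^{-1})=h(z^{-1})$. Interpreting the factor $-x^\alpha$ in the statement as $(-x)^\alpha=z^\alpha$, which is the unique reading consistent with $|x|^\alpha$ in the target equation \eqref{extended_eq_1}, the lemma for $x<0$ becomes precisely the reciprocal identity displayed above, evaluated at $z$. The boundary case $x=0$ is trivial because $h(0)=0$.

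It remains to prove the reciprocal identity for $z>0$, which I do by splitting $(0,1]$ at $z=1/2$ and then extending to $z>1$ by the involution $z \leftrightarrow z^{-1}$. For $z\in(0,1/2]$, Lemma~\ref{aux_lemma_3bis} gives $h(z)=-z^\alpha h(z^{-1}-1)$, and periodicity collapses $h(z^{-1}-1)$ to $h(z^{-1})$. For $z\in[1/2,1]$, equation \eqref{eq_difference_1} gives $h(z)=z^\alpha h(2-z^{-1})$; a shift by $-1$ rewrites $h(2-z^{-1})=h(1-z^{-1})$, and Lemma~\ref{aux_lemma_1} then yields $h(1-z^{-1})=-h(z^{-1})$. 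Thus the identity holds on $(0,1]$. Finally, for $z>1$, applying the identity just proved to $z^{-1}\in(0,1)$ gives $h(z^{-1})=-z^{-\alpha}h(z)$, which rearranges to $h(z)=-z^\alpha h(z^{-1})$, as required.

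I do not expect any of these steps to be a real obstacle: the whole argument is just careful bookkeeping of $1$-periodic shifts and of the reflection $x\mapsto 1-x$ encoded by Lemma~\ref{aux_lemma_1}. The only point demanding attention is the sign convention for ``$-x^\alpha$'' when $x<0$, which must be read as $(-x)^\alpha$ to be compatible with \eqref{extended_eq_1}; once that reading is fixed, every step is a direct invocation of a previously established lemma.
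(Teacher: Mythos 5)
Your proof is correct, and it reaches the lemma by a somewhat different decomposition than the paper's. The paper argues directly: it writes $h(x)=h(x+1)=-h(-x)$, applies the identity $h(y)=y^\alpha h(2-y^{-1})$ --- already available on all of $[0,\infty)$ from \eqref{eq_difference_1}, Lemma \ref{aux_lemma_2}, Lemma \ref{aux_lemma_3} and the unnumbered lemma on $\left[0,\tfrac12\right]$ --- at $y=-x$, and then turns $h\left(2+\tfrac1x\right)$ into $h\left(2-\tfrac1x\right)$ by one more use of Lemma \ref{aux_lemma_1} and \eqref{eq_difference_periodicity}. You instead isolate an intermediate inversion identity $h(z)=-z^\alpha h(z^{-1})$ for $z>0$, proved on $\left(0,\tfrac12\right]$ from Lemma \ref{aux_lemma_3bis}, on $\left[\tfrac12,1\right]$ from \eqref{eq_difference_1} together with Lemma \ref{aux_lemma_1}, and on $(1,\infty)$ by exploiting that $z\mapsto z^{-1}$ is an involution; the lemma then follows from the same reduction $h(x)=-h(-x)$ and $h(2-x^{-1})=h(z^{-1})$. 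Both arguments are bookkeeping with periodicity and the reflection of Lemma \ref{aux_lemma_1}; yours buys a cleaner intermediate statement (essentially the relation attached to the homography $z\mapsto -1/z$, i.e.\ the generator $S$ of $G$) and avoids invoking the positive-axis identity on $[1,\infty)$ at the top level, while the paper's version is shorter given that that identity is already in hand. Your reading of the factor $-x^\alpha$ as $(-x)^\alpha=|x|^\alpha$ agrees with the paper, whose proof concludes $h(x)=|x|^\alpha h\left(2-\tfrac1x\right)$ in accordance with \eqref{extended_eq_1}, and your handling of $x=0$ via $h(0)=0$ matches the convention used in Lemma \ref{aux_lemma_3bis}.
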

\begin{proof}
On the one hand, periodicity implies that $h(x) = h(x+1) \eq{Lem. \ref{aux_lemma_1}} -h(1-(x+1)) = -h(-x)$. On the other, for $x\leq 0$, the preceding results  imply that $h(-x) = (-x)^\alpha h(2-(-x)^{-1})= |x|^\alpha h(2-(-x)^{-1})$. Therefore, 
\begin{align*}
h(x) &= -h(-x) = -|x|^\alpha h \left(2+ \frac{1}{x}\right) \\
& \eq{Lem. \ref{aux_lemma_1}} |x|^\alpha h\left(1-\left(2+ \frac{1}{x}\right)\right) \eq{\ref{eq_difference_periodicity}}  |x|^\alpha h \left(2- \frac{1}{x}\right).
\end{align*} 
\end{proof}

The transformations $x\mapsto \frac{2x-1}{x}$ and $x\mapsto \frac{1-x}{x}$ in Equations \eqref{extended_eq_1} and \eqref{extended_eq_2} are homographies of the real projective line $P^1(\Rr)$, that we denote respectively by $\alpha$ and $\beta$. They correspond to elements
\begin{equation}
A=
\begin{pmatrix}
2 & -1 \\
1 & 0
\end{pmatrix},
\quad
B =
\begin{pmatrix}
-1 & 1 \\
1 & 0
\end{pmatrix}
\end{equation}
in $G$, that satisfy
\begin{equation}
B^2=
\begin{pmatrix}
2 & -1 \\
-1 & 1
\end{pmatrix},
\quad
BA^{-1} =
\begin{pmatrix}
-1 & 1 \\
0 & 1
\end{pmatrix}.
\end{equation}
This last matrix corresponds to $x\mapsto 1-x$.

\begin{lem}
The matrices $A$ and $B^2$ generate $G$.
\end{lem}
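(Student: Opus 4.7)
The plan is to exhibit both of the standard generators $S$ and $T$ of $G$ as explicit words in $A$ and $B^2$; combined with the presentation $G=\langle S,T\rangle$ recalled in Section~2, this will give the desired equality.

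For the first step I would recover $T$. Since $\operatorname{tr}(A)=2$, the matrix $A$ is parabolic, and a short calculation shows its unique fixed point on $P^1(\Rr)$ is $1$; the matrix $T$ is also parabolic, with fixed point $\infty$. The homography attached to $B^2$, namely $x\mapsto(2x-1)/(1-x)$, sends $1$ to $\infty$, which suggests that conjugation by $B^2$ should carry $A$ to a power of $T$. A direct $2\times 2$ multiplication confirms
\[
B^2\,A^{-1}\,(B^2)^{-1}=T,
\]
so $T\in\langle A,B^2\rangle$.

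For the second step I would recover $S$. At the level of homographies, $T^{-1}$ is the translation $x\mapsto x-1$, while $A$ acts as $x\mapsto 2-1/x$. Composing gives $T^{-2}A\colon x\mapsto -1/x$, which is exactly the homography associated with $S$. Hence $S=T^{-2}A$, and since $T$ already lies in $\langle A,B^2\rangle$, so does $S$.

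The two steps together yield $\langle A,B^2\rangle\supseteq\langle S,T\rangle=G$, and the reverse inclusion is automatic from $A,B^2\in G$. The only genuinely non-obvious ingredient is the first step: the observation that $B^2$ intertwines the stabilizers of $1$ and $\infty$, i.e.\ $B^2\cdot 1=\infty$. Once this geometric fact is recognized, both identities reduce to routine matrix arithmetic.
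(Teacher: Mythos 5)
Your proof is correct: both identities check out by direct multiplication in $SL_2(\Zz)$, namely $B^2A^{-1}B^{-2}=\left(\begin{smallmatrix}1&0\\-1&1\end{smallmatrix}\right)\left(\begin{smallmatrix}1&1\\1&2\end{smallmatrix}\right)=T$ and $T^{-2}A=\left(\begin{smallmatrix}1&-2\\0&1\end{smallmatrix}\right)\left(\begin{smallmatrix}2&-1\\1&0\end{smallmatrix}\right)=S$, and since $A$ and $B^2$ have determinant $1$ the reverse inclusion is indeed automatic. The underlying idea is the same as in the paper—reduce to the standard generators $S,T$—but your route is more direct. The paper's proof of the lemma conjugates by $P=S^{-1}T^{-1}$, observes $PAP^{-1}=T^{-1}$ and that $PB^2P^{-1}$ equals $T^3S$, and then argues somewhat indirectly that every element of $G$, being a word in $S,T$ after conjugation, is a word in $A$ and $B^{\pm 2}$; the explicit expressions $T=B^2A^{-1}B^{-2}$ and $S=B^2AB^{-2}A^2B^{-2}$ appear only as a remark afterwards. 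You bypass the conjugation entirely: your first identity is precisely the paper's post-lemma formula for $T$, and your second, $S=T^{-2}A$, yields $S=B^2A^2B^{-2}A$, a slightly shorter word than the paper's. The fixed-point heuristic (the parabolic $A$ fixes $1$, $T$ fixes $\infty$, and $B^2$ maps $1$ to $\infty$) is only motivation, but it is sound and explains why the conjugation $B^2A^{\pm1}B^{-2}$ lands on a power of $T$; the proof itself rests on the two verified matrix identities, so there is no gap.
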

\begin{proof}
Let 
$$P= S^{-1}T^{-1}=\begin{pmatrix}
0 & 1  \\
-1 & 1
\end{pmatrix}. $$
One has
\begin{equation}
P A P^{-1} = \begin{pmatrix}
1 & -1 \\
0 & 1
\end{pmatrix},
\end{equation}
and
\begin{equation}
P B^{2} P^{-1} = \begin{pmatrix}
3 & -1 \\
1 & 0
\end{pmatrix} .
\end{equation}
Therefore, $PAP^{-1} = T^{-1}$ and $S=T^{-3} P B^{-2} P^{-1}$. Inverting these relations, we obtain
\begin{equation}\label{S_and_T_presented}
T = PA^{-1}P^{-1}; \quad S=PA^3B^{-2}P^{-1}.
\end{equation}
Let $X$ be an arbitrary element of $G$. Since $Y=PXP^{-1}\in G$ and $G$ is generated by $S$ and $T$, the element $Y$ is a word in $S$ and $T$. In consequence, $X$ is a word in $P^{-1}SP$ and $P^{-1}TP$, which in turn are words $A$ and $B^2$.
\end{proof}

It is possible to find explicit formulas for $S$ and $T$ in terms of $A$ and $B^2$. Since $P=S^{-1}T^{-1}$, we deduce that $PSP^{-1}= S^{-1}T^{-1}STS$ and $PTP^{-1} = S^{-1}T^{-1}TTS = S^{-1}TS$. Hence, in virtue of \eqref{S_and_T_presented},
\begin{align*}
S & = P^{-1}S^{-1}T^{-1}STS P\\
&=(P^{-1} S^{-1}P)(P^{-1} T^{-1} P)(P^{-1} S P)(P^{-1} T P) (P^{-1} SP)\\
&= B^2 A B^{-2}A^2 B^{-2}
\end{align*}
and
\begin{align*}
T &= P^{-1} S^{-1} T S P \\
&= (P^{-1} S^{-1} P)(P^{-1} T P) (P^{-1} S P)\\
&= B^2 A^{-1} B^{-2}.
\end{align*}

To finish our proof of Proposition \ref{lemma_symmetry}, we remark that the orbit of $0$ by the action of $G$ on $P^1(\Rr)$ is $\Qq\cup\{\infty\}$, where $\Qq\cup\{\infty\}$ has been identified with $\{[p:q] \in P^1(\Rr) \mid p,q\in \Zz\}\subset P^1(\Rr)$.  This is a consequence of Bezout's identity: for every point $[p:q]\in P^1(\Rr)$ representing a reduced fraction $\frac p q \neq 0$ ($p,q \in \Zz\sm \{0\}$ and coprime), there are two integers $x,y$ such that $xq - yp = 1$. Therefore 
$$g'=\begin{pmatrix}
x & p \\
y & q
\end{pmatrix}$$
 is an element of $G$ and $g'[0:1] = [p:q]$. The case $q=0$ is covered by
 $$\begin{pmatrix}
0 & 1 \\
-1 & 0
\end{pmatrix} [0:1] = [1:0].$$

The extended equations \eqref{extended_eq_1} and \eqref{extended_eq_2} are such that
\begin{enumerate}
\item for all $x\in \Rr$,  if $h(x) = 0$ then $h(\alpha^{-1} x) = 0$ and $h(\beta^{-1} x) = 0$;
\item for all $x\in \Rr\sm\{0\}$, if $h(x) = 0$ then $h(\alpha x) = 0$ and $h(\beta x) = 0$.
\end{enumerate}
Since $h(1/2)=0$, the following lemma is the missing piece to establish that the extended $h$ vanishes on $\Qq$ (and hence the original $h$ necessarily vanishes on $[0,1]\cap \Qq)$. 
\begin{lem}
For any $r\in \Qq\sm\{0\}$, there exists a finite sequence $$w=(w_i)_{i=1}^n\in \{\alpha,\beta,\alpha^{-1},\beta^{-1}\}^n$$ such that  $r=w_n\circ   \cdots  \circ w_1(1/2)$ and, for all $i\in \{1,...,n\}$, the iterate $x_i:=w_i\circ \cdots\circ  w_1(1/2)$ does not equal $0$ or $\infty$.
\end{lem}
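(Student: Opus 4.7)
My plan is to reduce the question to a finite case analysis using the transitivity of the modular action. Since $\alpha$ and $\beta$ together generate a subgroup of $\mathrm{PGL}_2(\Zz)$ containing $G$---because $A$ and $B^2$ already generate $G$ by the preceding lemma, and both lie in $\langle \alpha, \beta\rangle$---and $G$ acts transitively on $\Qq \cup \{\infty\}$, for any $r \in \Qq \setminus \{0\}$ there \emph{exists} some word $W = w_n \circ \cdots \circ w_1$ in the alphabet $\{\alpha, \beta, \alpha^{-1}, \beta^{-1}\}$ with $W(1/2) = r$. What remains is to arrange that no intermediate value $x_i := w_i \circ \cdots \circ w_1(1/2)$ equals $0$ or $\infty$; my plan is to start from any such $W$ and modify it, by a finite sequence of local substitutions, into an admissible word.

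The key observation is that $0$ and $\infty$ each have only three neighbors in the graph $\Gamma$ on $\Qq \cup \{\infty\}$ whose edges are the pairs $(x, w(x))$ with $w \in \{\alpha^{\pm 1}, \beta^{\pm 1}\}$. A direct computation using $\alpha(x) = 2 - 1/x$, $\beta(x) = 1/x - 1$, $\alpha^{-1}(x) = 1/(2-x)$, and $\beta^{-1}(x) = 1/(x+1)$ yields that the neighbors of $0$ are $\{\infty, 1/2, 1\}$, while those of $\infty$ are $\{0, 2, -1\}$. It follows that any maximal \emph{bad} sub-word of $W$---one whose intermediate trajectory lies entirely in $\{0, \infty\}$---has both endpoints in the finite boundary set $B := \{1/2, 1, 2, -1\}$.

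For each pair $(x,y) \in B \times B$, I would exhibit an explicit admissible path from $x$ to $y$ in $\Gamma \setminus \{0, \infty\}$. Short detours suffice: $1/2 \overset{\beta}{\to} 1$; $2 \overset{\beta^{-1}}{\to} 1/3 \overset{\alpha}{\to} -1$; and $1/2 \overset{\beta^{-1}}{\to} 2/3 \overset{\alpha^{-1}}{\to} 3/4 \overset{\beta}{\to} 1/3 \overset{\beta}{\to} 2$, with $1/3$ serving as a convenient hub (its neighbors include both $2$ and $-1$). Iteratively substituting such detours into $W$ yields an admissible word: each substitution strictly decreases the number of intermediate occurrences of $0$ or $\infty$, so the procedure terminates. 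The main obstacle is the finite but tedious bookkeeping needed to verify that every pair in $B$ is joined by a detour in $\Gamma \setminus \{0, \infty\}$; once this connectivity is established, the lemma follows.
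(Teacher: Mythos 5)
Your argument is correct, and its engine is the same as the paper's: first get \emph{some} word in $\{\alpha^{\pm1},\beta^{\pm1}\}$ sending $1/2$ to $r$ from the transitivity of $G=\langle A,B^2\rangle$ on $\Qq\cup\{\infty\}$, then do surgery using the fact that a trajectory can only enter or leave $\{0,\infty\}$ through the finite set $\{1/2,\,1,\,2,\,-1\}$ (your computation of the neighbours of $0$ and $\infty$ is right, and your detours through $2/3$, $3/4$, $1/3$ all check out). The difference is in how the surgery is organized. You patch \emph{every} maximal bad run in place, so you must know that any two points of the boundary set $\{1/2,1,2,-1\}$ are joined inside the punctured graph; your three explicit detours do establish this, provided you add the (easy but worth stating) remark that admissible paths can be reversed, because the alphabet is closed under inversion, and concatenated, because the junction points are not $0$ or $\infty$. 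The paper sidesteps this all-pairs bookkeeping with one extra observation: it looks only at the \emph{largest} index $i$ with $x_i\in\{0,\infty\}$, notes that then $x_{i+1}\in\{1/2,1,2,-1\}$, and replaces the whole prefix $w_1,\dots,w_{i+1}$ by an explicit admissible word from the base point $1/2$ to $x_{i+1}$ (empty, $\beta$, $\beta\alpha\beta^{-1}\beta^{-1}$, or $\alpha\alpha\beta^{-1}\beta^{-1}$); since everything after index $i$ is already good, a single replacement finishes the proof and only paths starting at $1/2$ are ever needed. So the paper's version buys a shorter case analysis and no connectivity statement, while yours is a slightly more general ``local repair'' scheme; both are complete proofs of the lemma.
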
 
\begin{proof}
Since the orbit in $P^1(\Rr)$  of $1/2$ by the group of homographies generated by $A$ and $B^2$ (i.e. $G$ itself) contains the whole set of rational numbers $\Qq$, there exists a $w$ such that $r=w_n \circ \cdots \circ w_1(1/2)$, where each $w_i$ equals $\alpha$, $\beta$ or one of their inverses. 

If some iterate  equals $0$ or $\infty$, the sequence $w$ can be modified to avoid this. Let $i\in\{0,...,n\}$ be the largest index such that $x_i\in \{0,\infty\}$; in fact, $i<n$ because $r\neq 0,\infty$.
\begin{itemize}
\item If $x_i = 0$, then  $x_{i+1} \in \{1/2,1\}$ (the possibility $x_{i+1}=\infty$ is ruled out by the choice of $i$). In the case   $x_{i+1}=1/2$, the equality $r=w_n\circ \cdots\circ  w_{i+2}(1/2)$ holds, and when $x_{i+1}=1$, we have $r=w_n\circ \cdots\circ  w_{i+2}\circ \beta(1/2)$.
\item If $x_i = \infty$, then $x_{i+1}\in \{2,-1\}$ (again, $x_{i+1}=0$ is ruled out). When $x_{i+1}=2$, we have $ r=w_n\circ  \cdots\circ  w_{i+2} \circ \beta\circ  \alpha\circ  \beta^{-1}\circ \beta^{-1}(1/2)$, and when $x_{i+1}=-1$, it also holds that $r=w_n\circ  \cdots\circ  w_{i+2} \circ  \alpha \circ \alpha \circ  \beta^{-1}\circ  \beta^{-1}(1/2)$.
\end{itemize}
\end{proof}


 \bibliographystyle{abbrv}
\bibliography{Bibliography}

\begin{thebibliography}{10}

\bibitem{Aczel1975}
J.~Acz{\'e}l and Z.~Dar{\'o}czy.
\newblock {\em On Measures of Information and Their Characterizations}.
\newblock Mathematics in Science and Engineering. Academic Press, 1975.

\bibitem{Baudot2015}
P.~Baudot and D.~Bennequin.
\newblock The homological nature of entropy.
\newblock {\em Entropy}, 17(5):3253--3318, 2015.

\bibitem{Daroczy1970}
Z.~Dar{\'o}czy.
\newblock Generalized information functions.
\newblock {\em Information and control}, 16(1):36--51, 1970.

\bibitem{Havrda1967}
J.~Havrda and F.~Charv{\'a}t.
\newblock Quantification method of classification processes. {Concept} of
  structural $ a $-entropy.
\newblock {\em Kybernetika}, 3(1):30--35, 1967.

\bibitem{Kannappan1973}
P.~Kannappan and C.~T. Ng.
\newblock Measurable solutions of functional equations related to information
  theory.
\newblock {\em Proceedings of the American Mathematical Society}, 38(2):pp.
  303--310, 1973.

\bibitem{Serre1973}
J.~Serre.
\newblock {\em A Course in Arithmetic}.
\newblock Graduate texts in mathematics. Springer, 1973.

\bibitem{Shannon1948}
C.~Shannon.
\newblock A mathematical theory of communication.
\newblock {\em Bell System Technical Journal}, 27:379--423, 623--656, 1948.

\bibitem{Tsallis2009}
C.~Tsallis.
\newblock {\em Introduction to Nonextensive Statistical Mechanics: Approaching
  a Complex World}.
\newblock Springer New York, 2009.

\bibitem{Tverberg1958}
H.~Tverberg.
\newblock A new derivation of the information function.
\newblock {\em Mathematica Scandinavica}, 6:297--298, 1958.

\bibitem{Vigneaux2019-thesis}
J.~P. Vigneaux.
\newblock {\em Topology of Statistical Systems: A Cohomological Approach to
  Information Theory}.
\newblock PhD thesis, Universit\'e Paris Diderot, 2019.

\end{thebibliography}


\end{document}